\newtheorem{lemma}{Lemma}[section]
\newtheorem{theorem}{Theorem}[section]
\newtheorem{remark}{Remark}[section]
\newtheorem{proposition}{Proposition}[section]
\begin{document}
\begin{center}
\textbf{\LARGE{Inequalities having Seven Means \\ and Proportionality Relations}}
\end{center}

\smallskip
\begin{center}
\textbf{\large{Inder J. Taneja}}\\
Departamento de Matem\'{a}tica\\
Universidade Federal de Santa Catarina\\
88.040-900 Florian\'{o}polis, SC, Brazil.\\
\textit{e-mail: ijtaneja@gmail.com\\
http://www.mtm.ufsc.br/$\sim $taneja}
\end{center}

\begin{abstract}
In 2003, Eve \cite{eve}, studied seven means from geometrical point of view. These means are \textit{Harmonic, Geometric, Arithmetic, Heronian, Contra-harmonic, Root-mean square and Centroidal mean}. Some of these means are particular cases of Gini's \cite{gin} mean of order $r$ and $s$. In this paper we have established some proportionality relations having these means. Some inequalities among some of differences arising due to seven means inequalities are also established.
\end{abstract}

\bigskip
\textbf{Key words:} \textit{Arithmetic mean, Geometric mean, Heronian mean, triangular discrimination, Hellingar's distance}

\bigskip
\textbf{AMS Classification:} 94A17; 26A48; 26D07.

\section{Seven Geometrical Means}

Let $a,\,b>0$ be two positive numbers. In 2003, Eves \cite{eve} studied the geometrical
interpretation of the following seven menas:
\begin{enumerate}
\item Arithmetic mean: $A(a,b)={\left( {a+b} \right)} \mathord{\left/ {\vphantom {{\left( {a+b} \right)} 2}} \right. \kern-\nulldelimiterspace} 2$;
\item Geometric mean: $G(a,b)=\sqrt {ab} $;
\item Harmonic mean: $H(a,b)={2ab} \mathord{\left/ {\vphantom {{2ab} {\left( {a+b} \right)}}} \right. \kern-\nulldelimiterspace} {\left( {a+b} \right)}$;
\item Heronian mean: $N(a,b)={\left( {a+\sqrt {ab} +b} \right)} \mathord{\left/ {\vphantom {{\left( {a+\sqrt {ab} +b} \right)} 3}} \right. \kern-\nulldelimiterspace} 3$;
\item Contra-harmonic mean: $C(a,b)={\left( {a^{2}+b^{2}} \right)} \mathord{\left/ {\vphantom {{\left( {a^{2}+b^{2}} \right)} {\left( {a+b} \right)}}} \right. \kern-\nulldelimiterspace} {\left( {a+b} \right)}$
\item Root-mean-square: $S(a,b)=\sqrt {{\left( {a^{2}+b^{2}} \right)} \mathord{\left/ {\vphantom {{\left( {a^{2}+b^{2}} \right)} 2}} \right. \kern-\nulldelimiterspace} 2} $
\item Centroidal mean: $R(a,b)={2\left( {a^{2}+ab+b^{2}} \right)} \mathord{\left/ {\vphantom {{2\left( {a^{2}+ab+b^{2}} \right)} {3\left( {a+b} \right)}}} \right. \kern-\nulldelimiterspace} {3\left( {a+b} \right)}$
\end{enumerate}

Except 4 and 7 the above means are particular cases of well-known Gini
\cite{gin} mean of order $r$ and $s$ is given by
\begin{equation}
\label{eq1}
E_{r,s} (a,b)=\begin{cases}
 {\left( {\frac{a^{r}+b^{r}}{a^{s}+b^{s}}} \right)^{\frac{1}{r-s}}} & {r\ne
s} \\
 {\sqrt {ab} } & {r=s=0} \\
 {\exp \left( {\frac{a^{r}\ln a+b^{r}\ln b}{a^{r}+b^{r}}} \right)} & {r=s\ne
0} \\
\end{cases}.
\end{equation}

In particular, we have $E_{-1,0} =H$, $E_{-1/2,1/2} =G$, $E_{0,1} =A$,
$E_{0,2} =S$ and $E_{1,2} =R$. Since $E_{r,s} =E_{s,r} $, the Gini-mean
$E_{r,s} (a,b)$ is an increasing function in $r$ or $s$ \cite{czp}. In
view of this we have $H\le G\le A\le S\le C$. Moreover we can easily verify
the following inequality having the above seven means:
\begin{equation}
\label{eq2}
H\le G\le N\le A\le R\le S\le C.
\end{equation}
We can write, $M(a,b)=b\,f_{M} (a/b)$, where $M$stands for any of the above
seven means, then we have
\begin{equation}
\label{eq3}
f_{H} (x)\le f_{G} (x)
\le f_{N} (x))\le f_{A} (x)\le f_{R} (x)\le f_{S} (x)\le f_{C} (x),
\end{equation}

\noindent where $f_{H} (x)=2x/(x+1)$, $f_{G} (x)=\sqrt x $, $f_{N} (x)=(x+\sqrt x
+1)/3$, $f_{A} (x)=(x+1)/2$, $f_{R} (x)=2(x^{2}+x+1)/3(x+1)$, $f_{S}
(x)=\sqrt {(x^{2}+1)/2} $ and $f_{C} (x)=(x^{2}+1)/(x+1)$, $\forall x>0$,
$x\ne 1$. We have equality sign in (\ref{eq3}) iff $x=1$. For simplicity, let us
write
\begin{equation}
\label{eq4}
D_{AB} =b\,f_{AB} (a,b),
\end{equation}

\noindent where $f_{UV} (x)=f_{U} (x)-f_{V} (x)$, with $U\ge V$. Inequalities
appearing in (\ref{eq2}) admits 21 nonnegative differences. Some of these are
equal with multiplicative constants as given below:
\begin{align}
\label{eq5}
& \Delta :=3D_{CR} =2D_{AH} =2D_{CA} =D_{CH} =6D_{RA} =\textstyle{3 \over
2}D_{RH},\\
\label{eq6}
& h:=3D_{AN} =D_{AG} =\textstyle{3 \over 2}D_{NG}
\intertext{and}
\label{eq7}
& D_{CG} =3D_{RN}.
\end{align}

The measures $\Delta $ and $h$ appearing in (\ref{eq5}) and (\ref{eq6}) are respectively the \textit{triangular discrimination} \cite{lec} and \textit{Hellingar's distance} \cite{hel} respectively and are given by
\[
\Delta (a,b)=\frac{\left( {a-b} \right)^{2}}{a+b}
\]
and
\[
h(a,b)=\frac{\left( {\sqrt a -\sqrt b } \right)^{2}}{2}.
\]
More studied on these two measures can be seen in \cite{tan1, tan2, tan3}.

\bigskip
We shall improve considerably the inequalities given in (\ref{eq2}). For this we need first the convexity of the difference of means. In total, we have 21 differences. Some of them are equal to each other with some multiplicative constants.  Some of them are not convex and some of them are convex.

\section{Convexity of Difference of Means}

Let us prove now the convexity of some of the difference of means arising
due to inequalities (\ref{eq2}). In order to prove it we shall make use of the
following lemma \cite{tan1, tan2}.

\begin{lemma} Let $f:I\subset {\rm R}_{+} \to {\rm R}$ be a convex and
differentiable function satisfying $f(1)=0$. Consider a function
\[
\varphi_{f} (a,b)=af\left( {\frac{b}{a}} \right),
\quad
a,b>0,
\]
then the function $\varphi_{f} (a,b)$ is convex in ${\rm R}_{+}^{2} $.
Additionally, if $f^{\prime }(1)=0$, then the following inequality hold:
\[
0\le \varphi_{f} (a,b)\le \left( {\frac{b-a}{a}} \right)\varphi_{{f}'}
(a,b).
\]
\end{lemma}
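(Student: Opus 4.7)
The statement splits into two independent claims: the unconditional convexity of $\varphi_f$, and a two-sided inequality that uses the extra hypothesis $f'(1)=0$. I would handle them separately, since each reduces to one standard convexity tool.

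\textbf{Step 1 (convexity).} My first approach would be a direct Hessian computation. Differentiating $\varphi_f(a,b)=af(b/a)$ twice, all three second partials factor as a rational function of $(a,b)$ times $f''(b/a)$; a short calculation shows that the Hessian determinant vanishes identically while its trace is a positive multiple of $f''(b/a)\ge 0$. Hence the Hessian is positive semi-definite on $\mathbb{R}_+^2$ and $\varphi_f$ is convex there. As an alternative, which I would mention if one prefers to avoid the Hessian, the classical ``perspective function preserves convexity'' argument works: for $a=\lambda a_1+(1-\lambda)a_2$, the quotient $b/a$ is itself a convex combination of $b_1/a_1$ and $b_2/a_2$ with weights $\lambda a_i/a$, after which convexity of $f$ finishes the job in one line.

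\textbf{Step 2 (two-sided bound).} For the lower bound, convexity together with $f'(1)=0$ forces $x=1$ to be a global minimizer of $f$, so $f(1)=0$ gives $f\ge 0$ on $I$, and therefore $\varphi_f(a,b)=af(b/a)\ge 0$. For the upper bound I would apply the tangent-line characterization of convexity to $f$ at the point $x=b/a$:
\[
f(y)\ge f(x)+f'(x)(y-x)\quad\text{for all }y\in I.
\]
Taking $y=1$ and using $f(1)=0$ gives $f(x)\le (x-1)f'(x)$; multiplying by $a>0$ rearranges to exactly $\varphi_f(a,b)\le \frac{b-a}{a}\varphi_{f'}(a,b)$, as required.

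\textbf{Anticipated difficulty.} Neither step is technically hard; the subtle point worth flagging is that only the non-negativity truly needs $f'(1)=0$, while the upper bound rests purely on convexity of $f$ and the normalization $f(1)=0$. Keeping the roles of the two hypotheses cleanly separated is the main thing to get right in the write-up, and is what justifies stating the non-negativity only in the second clause of the lemma.
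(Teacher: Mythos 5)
Your proposal is correct. Note, however, that the paper does not prove this lemma at all: it is quoted from the earlier works \cite{tan1, tan2}, so there is no in-paper argument to compare against. What you give is the standard proof — the Hessian of $af(b/a)$ is singular with trace a positive multiple of $f''(b/a)$ (equivalently, the perspective of a convex function is convex), the lower bound follows because $f'(1)=0$ makes $x=1$ a global minimizer with $f(1)=0$, and the upper bound is the tangent-line inequality $f(y)\ge f(x)+f'(x)(y-x)$ at $y=1$ rescaled by $a$. Your observation that the upper bound uses only convexity and $f(1)=0$, while $f'(1)=0$ is needed only for non-negativity, is accurate and worth keeping; the computations all check out, so your write-up in effect supplies the proof the paper omits.
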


In all the cases, it is easy to check that $f_{AB} (1)=f_{A} (1)-f_{B} (1)=1-1=0$. According to Lemma 2.1, it is sufficient to show the convexity of the functions $f_{AB} (x)$. It requires only to show that the second order
derivative of $f_{AB} (x)$ to be nonnegative for all $x>0$. Here below are the second order derivatives of the convex functions:
\begin{align}
& {f}''_{CS} (x)={f}''_{C} (x)-{f}''_{S} (x)=\frac{2\left[ {2\left( {2x^{2}+2}
\right)^{3/2}-\left( {x+1} \right)^{3}} \right]}{(x+1)^{3}\left( {2x^{2}+2}
\right)^{3/2}}, \notag\\
& {f}''_{CN} (x)={f}''_{C} (x)-{f}''_{N} (x)=\frac{48x^{3/2}+\left( {x+1}
\right)^{3}}{12x^{3/2}(x+1)^{3}}>0, \notag\\
& {f}''_{CG} (x)={f}''_{C} (x)-{f}''_{G}
(x)=\frac{16x^{3/2}+(x+1)^{3}}{4x^{3/2}(x+1)^{3}}>0, \notag\\
& {f}''_{SA} (x)={f}''_{S} (x)-{f}''_{A} (x)=\frac{1}{\left( {x^{2}+1}
\right)\sqrt {2x^{2}+2} }>0,\notag\\
& {f}''_{SN} (x)={f}''_{S} (x)-{f}''_{N} (x)=\frac{12x^{3/2}+\left( {x^{2}+1}
\right)\sqrt {2x^{2}+2} }{12x^{3/2}\left( {x^{2}+1} \right)\sqrt {2x^{2}+2}
}>0. \notag\\
& {f}''_{SG} (x)={f}''_{S} (x)-{f}''_{G} (x)=\frac{4x^{3/2}+\left( {x^{2}+1}
\right)\sqrt {2x^{2}+2} }{4x^{3/2}\left( {x^{2}+1} \right)\sqrt {2x^{2}+2}
}>0, \notag\\
& {f}''_{SH} (x)={f}''_{S} (x)-{f}''_{H} (x)=\frac{\left( {x+1}
\right)^{3}+4\left( {x^{2}+1} \right)\sqrt {2x^{2}+2} }{\left( {x+1}
\right)^{3}\left( {x^{2}+1} \right)\sqrt {2x^{2}+2} }>0, \notag\\
& {f}''_{AG} (x)={f}''_{A} (x)-{f}''_{G} (x)=\frac{1}{4x^{3/2}}>0, \notag\\
& {f}''_{AH} (x)={f}''_{A} (x)-{f}''_{H} (x)=\frac{4}{\left( {x+1}
\right)^{3}}>0 \notag
\intertext{and}
& {f}''_{RG} (x)={f}''_{R} (x)-{f}''_{G} (x)=\frac{16x^{3/2}+3\left( {x+1}
\right)^{3}}{4x^{3/2}\left( {x+1} \right)^{3}}>0. \notag
\end{align}

\noindent Since, $S\ge A$, this implies that $S^{3}\ge A^{3}$, i.e., $\left( {\sqrt
{\frac{x^{2}+1}{2}} } \right)^{3}-\left( {\frac{x+1}{2}} \right)^{3}\ge 0$.
This gives $2\left( {2x^{2}+2} \right)^{3/2}-\left( {x+1} \right)^{3}\ge 0$.
Thus we have ${f}''_{CS} (x)\ge 0$ for all $x>0$. The difference means ${D}_{SR}$, ${D}_{NH}$ and ${D}_{GH}$  are not convex.

\section{Inequalities among of Differences of Means}

In this section we shall bring sequence of inequalities based on the
differences arising due to (\ref{eq8}). This we shall present in two parts. The
results given in this section are based on the applications of the following
lemma \cite{tan1, tan2}:

\begin{lemma} Let $f_{1} ,f_{2} :I\subset {\rm R}_{+} \to {\rm R}$ be
two convex functions satisfying the assumptions:

(i) $f_{1} (1)=f_{1}^{\prime }(1)=0$, $f_{2} (1)=f_{2}^{\prime }(1)=0$;

(ii) $f_{1} $ and $f_{2} $ are twice differentiable in ${\rm R}_{+} $;

(iii) there exists the real constants $\alpha ,\beta $ such that $0\le
\alpha <\beta $ and
\[
\alpha \le \frac{f_{1}^{\prime \prime }(x)}{f_{2}^{\prime \prime }(x)}\le
\beta ,
\quad
f_{2}^{\prime \prime }(x)>0,
\]
for all $x>0$ then we have the inequalities:
\[
\alpha \mbox{\thinspace }\varphi_{f_{2} } (a,b)\le \varphi_{f_{1} }
(a,b)\le \beta \mbox{\thinspace }\varphi_{f_{2} } (a,b),
\]
for all $a,b\in (0,\infty )$, where the function $\phi_{(\cdot )} (a,b)$ is
as defined in Lemma 2.1.
\end{lemma}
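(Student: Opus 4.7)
The plan is to reduce the two-sided inequality to two separate applications of Lemma 2.1 by building suitable auxiliary functions from $f_1$ and $f_2$. The key observation is that the map $f \mapsto \varphi_f$ defined by $\varphi_f(a,b) = a f(b/a)$ is linear in $f$, so non-negativity statements about combinations of $f_1$ and $f_2$ transfer directly to the corresponding combinations of $\varphi_{f_1}$ and $\varphi_{f_2}$.

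First, I would introduce
\[
g_1(x) := f_1(x) - \alpha\, f_2(x), \qquad g_2(x) := \beta\, f_2(x) - f_1(x).
\]
Using hypothesis (i), both functions satisfy $g_i(1)=0$ and $g_i'(1)=0$. Using (ii) they are twice differentiable, and using (iii) together with $f_2''(x) > 0$ we obtain
\[
g_1''(x) = f_2''(x)\!\left(\frac{f_1''(x)}{f_2''(x)} - \alpha\right) \ge 0, \qquad g_2''(x) = f_2''(x)\!\left(\beta - \frac{f_1''(x)}{f_2''(x)}\right) \ge 0,
\]
so $g_1$ and $g_2$ are convex on $\mathbb{R}_+$ and meet all the hypotheses of Lemma 2.1.

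Next, I would apply Lemma 2.1 to $g_1$ and to $g_2$ separately, which gives $\varphi_{g_1}(a,b)\ge 0$ and $\varphi_{g_2}(a,b)\ge 0$ for all $a,b>0$. Finally, the linearity $\varphi_{\lambda f+\mu g}(a,b) = \lambda\,\varphi_f(a,b) + \mu\,\varphi_g(a,b)$, which is immediate from the definition $\varphi_f(a,b)=a f(b/a)$, converts these two non-negativities into
\[
\varphi_{f_1}(a,b) - \alpha\,\varphi_{f_2}(a,b) \ge 0, \qquad \beta\,\varphi_{f_2}(a,b) - \varphi_{f_1}(a,b) \ge 0,
\]
which combine into the claimed two-sided bound.

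I do not expect any genuine obstacle here: the proof is essentially a linearity-plus-nonnegativity argument once the auxiliary functions are chosen correctly. The only thing to be a little careful about is that the hypothesis only asserts $f_2''(x)>0$ (not $f_1''(x)>0$), so one should phrase the convexity of $g_1$ and $g_2$ as above, reading the sign of $g_i''$ off the product with $f_2''>0$, rather than trying to verify convexity of $f_1$ directly from the ratio bound.
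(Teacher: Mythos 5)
Your proof is correct. Note that the paper itself gives no proof of this lemma; it is quoted from the author's earlier works \cite{tan1, tan2}, where the argument is essentially the one you give: form $g_1=f_1-\alpha f_2$ and $g_2=\beta f_2-f_1$, check that each is convex with $g_i(1)=g_i'(1)=0$, invoke the nonnegativity part of Lemma 2.1 (or simply the tangent-line inequality $g_i(x)\ge g_i(1)+g_i'(1)(x-1)=0$), and use linearity of $f\mapsto\varphi_f$. Your caution about reading the sign of $g_i''$ off the product with $f_2''>0$ rather than assuming anything about $f_1''$ is exactly right.
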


The inequalities appearing in (\ref{eq2}) admits 21 nonnegative differences. The
differences satisfies some simple inequalities. These are given by the
following \textbf{pyramid}:

\[
D_{GH} ;
\]
\[
D_{NG} \le D_{NH} ;
\]
\[
D_{AN} \le D_{AG} \le D_{AH} ;
\]
\[
D_{RA} \le D_{RN} \le D_{RG} \le D_{RH} ;
\]
\[
D_{SR} \le D_{SA} \le D_{SN} \le D_{SG} \le D_{SH} ;
\]
\[
D_{CS} \le D_{CR} \le D_{CA} \le D_{CN} \le D_{CG} \le D_{CH},
\]

\bigskip
\noindent where the $D_{GH} =G-H$, $D_{NG} =N-G$, $D_{CS} =C-S$, etc. As we have seen
above some of these differences are equals with multiplicative constants.

\smallskip
The difference means $D_{SR} $, $D_{NH} $and $D_{GH} $ are not convex. The other convex measures satisfies some interesting inequalities with each other given by the theorem below.

\begin{theorem} The following inequalities hold:
\begin{equation}
\label{eq19}
D_{SA} \le \left\{ {\begin{array}{l}
 \textstyle{3 \over 4}D_{SN} \\
 \textstyle{1 \over 3}D_{SH} \le \textstyle{3 \over 4}D_{CR} \\
 \end{array}} \right\}\le \left\{ {\begin{array}{l}
 \textstyle{3 \over 7}D_{CN} \le \left\{ {\begin{array}{l}
 D_{CS} \\
 \textstyle{1 \over 3}D_{CG} \le \textstyle{3 \over 5}D_{RG} \\
 \end{array}} \right. \\
 \textstyle{1 \over 2}D_{SG} \le \textstyle{3 \over 5}D_{RG} \\
 \end{array}} \right\}\le 3D_{AN} .
\end{equation}
\end{theorem}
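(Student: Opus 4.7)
The plan is to derive the full pyramid by applying Lemma~2.2 to each of its thirteen elementary links. First I check the hypotheses of that lemma. The condition $f_{XY}(1)=0$ is immediate from $f_M(1)=1$ for every mean $M$, and the symmetry $M(a,b)=M(b,a)$ translates into $f_M(x)=xf_M(1/x)$; differentiating at $x=1$ gives $f'_M(1)=1/2$, so $f'_{XY}(1)=0$. Convexity is already established in Section~2 for every difference appearing in (\ref{eq19}) apart from $D_{CR}$ and $D_{AN}$, which inherit convexity from the proportionalities $3D_{CR}=2D_{AH}$ and $3D_{AN}=D_{AG}$ of (\ref{eq5}), (\ref{eq6}); these also give $f''_{CR}=\tfrac23 f''_{AH}$ and $f''_{AN}=\tfrac13 f''_{AG}$, both positive.

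For each link $c_1 D_{X}\le c_2 D_{Y}$ in the pyramid I must show that the ratio $g(x):=f''_{X}(x)/f''_{Y}(x)$ satisfies $\sup_{x>0}g(x)=g(1)=c_2/c_1$. A single structural observation makes $x=1$ an automatic critical point: differentiating $f_{XY}(x)=xf_{XY}(1/x)$ twice yields $f''_{XY}(x)=x^{-3}f''_{XY}(1/x)$, so every such $g$ is invariant under $x\mapsto 1/x$. After the substitution $x=e^{t}$, the building blocks $x^{3/2}$, $(x+1)^{3}$, $(x^{2}+1)\sqrt{2x^{2}+2}$, $(2x^{2}+2)^{3/2}$ from Section~2 all reduce (up to powers of $e^{t/2}$) to polynomials in $\cosh t$ and $\cosh(t/2)$, so each $g(e^{t})$ becomes a manifestly even function of $t$ whose maximum I can locate at $t=0$ by the elementary monotonicity of $\cosh$ on $[0,\infty)$.

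The numerical constants then emerge by direct evaluation at $x=1$. Using the Section~2 formulas one computes
\[
\begin{array}{lllll}
f''_{SA}(1)=\tfrac14, & f''_{SN}(1)=\tfrac13, & f''_{SG}(1)=\tfrac12, & f''_{SH}(1)=\tfrac34, & f''_{CS}(1)=\tfrac14,\\[2pt]
f''_{CN}(1)=\tfrac{7}{12}, & f''_{CG}(1)=\tfrac34, & f''_{CR}(1)=\tfrac13, & f''_{RG}(1)=\tfrac{5}{12}, & f''_{AN}(1)=\tfrac{1}{12},
\end{array}
\]
and feeding these into the thirteen pairwise ratios recovers exactly the multipliers of (\ref{eq19}): for example $f''_{SA}(1)/f''_{SN}(1)=3/4$, $f''_{SA}(1)/f''_{SH}(1)=1/3$, $f''_{SH}(1)/f''_{CR}(1)=9/4$, $f''_{CN}(1)/f''_{CS}(1)=7/3$, $f''_{CG}(1)/f''_{RG}(1)=9/5$, $f''_{CS}(1)/f''_{AN}(1)=3$, $f''_{RG}(1)/f''_{AN}(1)=5$, and similarly for the remaining six links. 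Lemma~2.2 then turns each global bound $g(x)\le g(1)$ into the corresponding inequality between the $\varphi_{f}$'s, i.e.\ between the $D$'s, and transitivity stitches the pyramid together.

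The main obstacle is the monotonicity step in the second paragraph. For most links the rewriting $g=A/(A+B)$ reduces the claim to showing that a quantity such as $(x^{2}+1)\sqrt{2x^{2}+2}/x^{3/2}$ or $(x+1)^{3}/x^{3/2}$ attains its minimum at $x=1$, which after $x=e^{t}$ is immediate. The genuinely delicate link is $\tfrac37 D_{CN}\le D_{CS}$, because $f''_{CS}$ carries the factor $2(2x^{2}+2)^{3/2}-(x+1)^{3}$, whose mere nonnegativity is already a nontrivial consequence of $S\ge A$ (remarked at the end of Section~2); to pin down the sharp upper bound $7/3$ for $f''_{CN}/f''_{CS}$ this factor must be controlled precisely, not just shown to be nonnegative, and this is the step I expect to require the most care.
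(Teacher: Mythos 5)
Your overall strategy --- apply the comparison lemma (Lemma 3.1 of the paper; there is no Lemma 2.2) to each elementary link, using the symmetry $f''_{XY}(x)=x^{-3}f''_{XY}(1/x)$ to make $x=1$ a critical point of each ratio $g=f''_{X}/f''_{Y}$ and then locating the supremum there --- is exactly the paper's strategy for ten of the thirteen links, and your table of values $f''_{XY}(1)$ and the resulting constants are all correct. The genuine gap is the blanket assertion that every such ratio attains its supremum at $x=1$: this is false for precisely the links you would need it most. For $D_{CN}\le \tfrac73 D_{CS}$ one has $f''_{CN}(x)=\tfrac{4}{(x+1)^{3}}+\tfrac{1}{12}x^{-3/2}\sim \tfrac{1}{12}x^{-3/2}$ while $f''_{CS}(x)=\tfrac{4}{(x+1)^{3}}-\tfrac{2}{(2x^{2}+2)^{3/2}}\sim (4-2^{-1/2})x^{-3}$ as $x\to\infty$, so $f''_{CN}/f''_{CS}\to\infty$ and no finite $\beta$ exists; Lemma 3.1 simply cannot produce this link. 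For $D_{SN}\le\tfrac47 D_{CN}$ the ratio $f''_{SN}/f''_{CN}$ equals $4/7$ at $x=1$ but tends to $1$ as $x\to\infty$ (both second derivatives are asymptotic to $\tfrac{1}{12}x^{-3/2}$; numerically the ratio already exceeds $0.69$ at $x=16$), so its supremum is $1$ and the lemma yields only the much weaker $D_{SN}\le D_{CN}$. A symmetric critical point need not be a global maximum, and the ``monotonicity of $\cosh$'' heuristic cannot rescue these cases because the ratios are quotients of sums of terms with different decay rates.

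The paper closes exactly these holes by abandoning the ratio method in parts 6, 8 and 11: it reduces both $D_{SN}\le\tfrac47 D_{CN}$ and $D_{CN}\le\tfrac73 D_{CS}$ to the single algebraic inequality $4C+3N-7S\ge 0$ (and handles $D_{SG}\le\tfrac65 D_{RG}$ via $6R-G-5S\ge 0$), proved by multiplying by the conjugate and factoring the resulting polynomial as a positive polynomial times $(\sqrt{x}-1)^{4}$. You flagged $\tfrac37 D_{CN}\le D_{CS}$ as the step requiring ``the most care,'' but it is not merely delicate --- it is unreachable by your method --- so an independent argument of this algebraic kind is indispensable. Until you supply it, together with the analogous arguments for the other two links, the proof is incomplete. (A secondary point: even for the ten links where the supremum genuinely is at $x=1$, the paper certifies this by computing $g'$ explicitly and exhibiting its sign change; your evenness-plus-$\cosh$ sketch would need to be fleshed out to that level to count as a proof.)
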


\begin{proof} We shall prove above result by parts. Here we shall use frequently the second order derivatives given in section 2.
\begin{enumerate}
\item \textbf{For }$\bf{D_{SA} \le \textstyle{3 \over 4}D_{SN}} $\textbf{: }Let us consider the function $_{\mathrm{\thinspace }}g_{SA\mathunderscore SN} (x)={{f}''_{SA} (x)} \mathord{\left/ {\vphantom {{{f}''_{SA} (x)} {{f}''_{SN} (x)}}} \right. \kern-\nulldelimiterspace} {{f}''_{SN} (x)}$. This gives
\[
{g}'_{SA\mathunderscore SN} (x)=-\frac{72\left( {x^{2}-1} \right)\sqrt x
\sqrt {2x^{2}+2} }{\left[ {24x^{3/2}+\left( {2x^{2}+2} \right)^{3/2}}
\right]^{2}}
\left\{ {{\begin{array}{*{20}c}
 {>0} & {x<1} \\
 {<0} & {x>1} \\
\end{array} }} \right.,
\]

Also we have
\begin{equation}
\label{eq20}
\beta_{SA\mathunderscore SN} =\mathop {\sup }\limits_{x\in (0,\infty )}
g_{SA\mathunderscore SN} (x)=g_{SA\mathunderscore SN} (1)=\frac{3}{4}.
\end{equation}

By the application Lemma 3.1 with (\ref{eq20}) we get the required result.

\item \textbf{For }$\bf{D_{SA} \le \textstyle{1 \over 3}D_{SH}} $\textbf{: }Let us consider the function $_{\mathrm{\thinspace }}g_{SA\mathunderscore SH} (x)={{f}''_{SA} (x)} \mathord{\left/ {\vphantom {{{f}''_{SA} (x)} {{f}''_{SH} (x)}}} \right. \kern-\nulldelimiterspace} {{f}''_{SH} (x)}$. This gives
\[
{g}'_{SA\mathunderscore SH} (x)=-\frac{12\left( {x-1} \right)\left( {x+1}
\right)^{2}\sqrt {2x^{2}+2} }{\left[ {\left( {x+1} \right)^{3}+4\left(
{x^{2}+1} \right)\sqrt {2x^{2}+2} } \right]^{2}}
\left\{ {{\begin{array}{*{20}c}
 {>0} & {x<1} \\
 {<0} & {x>1} \\
\end{array} }} \right.,
\]

Also we have
\begin{equation}
\label{eq21}
\beta_{SA\mathunderscore SH} =\mathop {\sup }\limits_{x\in (0,\infty )}
g_{SA\mathunderscore SH} (x)=g_{SA\mathunderscore SH} (1)=\frac{1}{3}.
\end{equation}

By the application Lemma 3.1 with (\ref{eq21}) we get the required result.

\item \textbf{For }$\bf{D_{SH} \le \textstyle{9 \over 4}D_{CR}} $\textbf{: }Let us consider the function $_{\mathrm{\thinspace }}g_{SH\mathunderscore CR} (x)={{f}''_{SH} (x)} \mathord{\left/ {\vphantom {{{f}''_{SH} (x)} {{f}''_{CR} (x)}}} \right. \kern-\nulldelimiterspace} {{f}''_{CR} (x)}$. This gives
\[
{g}'_{SH\mathunderscore CR} (x)=-\frac{9\left( {x-1} \right)\left( {x+1}
\right)^{2}}{8\left( {x^{2}+1} \right)^{2}\sqrt {2x^{2}+2} }
\left\{ {{\begin{array}{*{20}c}
 {>0} & {x<1} \\
 {<0} & {x>1} \\
\end{array} }} \right..
\]

Also we have
\begin{equation}
\label{eq22}
\beta_{SA\mathunderscore SH} =\mathop {\sup }\limits_{x\in (0,\infty )}
g_{SA\mathunderscore SH} (x)=g_{SA\mathunderscore SH} (1)=\frac{1}{3}.
\end{equation}

By the application Lemma 3.1 with (\ref{eq23}) we get the required result.

\item \textbf{For }$\bf{D_{CR} \le \textstyle{4 \over 7}D_{CN}} $\textbf{: }Let us consider the function $_{\mathrm{\thinspace }}g_{CR\mathunderscore CN} (x)={{f}''_{CR} (x)} \mathord{\left/ {\vphantom {{{f}''_{CR} (x)} {{f}''_{CN} (x)}}} \right. \kern-\nulldelimiterspace} {{f}''_{CN} (x)}$. This gives
\[
{g}'_{CR\mathunderscore CN} (x)=-\frac{48\sqrt x \left( {x-1} \right)\left(
{x+1} \right)^{2}}{\left[ {48x^{3/2}+\left( {x+1} \right)^{3}} \right]^{2}}
\left\{ {{\begin{array}{*{20}c}
 {>0} & {x<1} \\
 {<0} & {x>1} \\
\end{array} }} \right..
\]

Also we have
\begin{equation}
\label{eq23}
\beta_{CR\mathunderscore CN} =\mathop {\sup }\limits_{x\in (0,\infty )}
g_{CR\mathunderscore CN} (x)=g_{CR\mathunderscore CN} (1)=\frac{4}{7}.
\end{equation}

By the application Lemma 3.1 with (\ref{eq22}) we get the required result.

\item \textbf{For }$\bf{D_{CR} \le \textstyle{2 \over 3}D_{SG}} $\textbf{: }Let us consider the function $_{\mathrm{\thinspace }}g_{CR\mathunderscore SG} (x)={{f}''_{CR} (x)} \mathord{\left/ {\vphantom {{{f}''_{CR} (x)} {{f}''_{SG} (x)}}} \right. \kern-\nulldelimiterspace} {{f}''_{SG} (x)}$. This gives
\[
{g}'_{CR\mathunderscore SG} (x)=-\frac{16\left( {x-1} \right)\sqrt x \sqrt
{2x^{2}+2} \times v_{1} (x)}{\left( {x+1} \right)^{3}\left[ {4x^{3/2}+\left(
{x^{2}+1} \right)\sqrt {2x^{2}+2} } \right]^{2}}
\left\{ {{\begin{array}{*{20}c}
 {>0} & {x<1} \\
 {<0} & {x>1} \\
\end{array} }} \right.,
\]
where
\[
v_{1} (x)=\left( {x^{2}+1} \right)^{2}\sqrt {2x^{2}+2} -8x^{5/2}
=8\left[ {\left( {\sqrt {\frac{x^{2}+1}{2}} } \right)^{5}-\left( {\sqrt x }
\right)^{5}} \right]>0_{\mathrm{.}}
\]

Above expression holds since $S>G$, $\forall x>0,\,x\ne 1$. Also we have
\begin{equation}
\label{eq24}
\beta_{CR\mathunderscore SG} =\mathop {\sup }\limits_{x\in (0,\infty )}
g_{CR\mathunderscore SG} (x)=g_{CR\mathunderscore SG} (1)=\frac{2}{3}.
\end{equation}

By the application Lemma 3.1 with (\ref{eq24}) we get the required result.

\item \textbf{For }$\bf{D_{SN} \le \textstyle{4 \over 7}D_{CN}} $\textbf{: } We have $\beta_{SN\mathunderscore CN} ={{f}''_{SN} (1)} \mathord{\left/ {\vphantom {{{f}''_{SN} (1)} {{f}''_{CN} (1)}}} \right. \kern-\nulldelimiterspace} {{f}''_{CN} (1)}=\textstyle{4 \over 7}$. Now, we have to show that $\textstyle{4 \over 7}D_{CN} -D_{SN} \ge 0$, i.e., $\textstyle{1 \over 7}\left( {4C+3N-7S} \right)\ge 0$. We can write $\textstyle{4 \over 7}D_{CN} -D_{SN}{\mathrm{\thinspace }}=b\,f_{CN\mathunderscore SN} (a/b)$, where
\[
f_{CN\mathunderscore SN} (x)=\textstyle{4 \over 7}f_{SN} (x)-f_{CN} (x)
=\frac{1}{14\left( {x+1} \right)}\times v_{2} (x),
\]
where
\[
v_{2} (x)=10x^{2}+10+4x+2x^{3/2}+2\sqrt x -7\left( {x+1} \right)\sqrt
{2x^{2}+2} .
\]

In order to prove the non-negativity of $v_{2} (x)$, let us consider the
function
\begin{align}
h_{2} (x)& =\left( {10x^{2}+10+4x+2x^{3/2}+2\sqrt x } \right)^{2}-\left(
{7\left( {x+1} \right)\sqrt {2x^{2}+2} } \right)^{2}\notag\\
& =\left( {2x^{2}+48x^{3/2}+68x+48\sqrt x +2} \right)\left( {\sqrt x -1}
\right)^{4}.\notag
\end{align}

Since $h_{2} (x)\ge 0$, giving $v_{2} (x)\ge 0$, $\forall x>0$. This implies
that $f_{CN\mathunderscore SN} (x)\ge 0$, $\forall x>0$, hence proving the
required result.

\textbf{Argument:} \textit{Let }$a$\textit{ and }$b$\textit{ two positive numbers, i.e., }$a>0$\textit{ and }$b>0$\textit{. If }$a^{2}-b^{2}\ge 0$\textit{, then we can conclude that }$a\ge b$\textit{ because }$a-b=({a^{2}-b^{2})}
\mathord{\left/ {\vphantom {{a^{2}-b^{2})} {(a+b)}}} \right.
\kern-\nulldelimiterspace} {(a+b)}$\textit{. We have used this argument to prove }$v_{2} (x)\ge 0, \forall x>0. $

\item \textbf{For }$\bf{D_{SN} \le \textstyle{2 \over 3}D_{SG}} $\textbf{: }Let us consider the function $_{\mathrm{\thinspace }}g_{SN\mathunderscore SG} (x)={{f}''_{SN} (x)} \mathord{\left/ {\vphantom {{{f}''_{SN} (x)} {{f}''_{SG} (x)}}} \right. \kern-\nulldelimiterspace} {{f}''_{SG} (x)}$. This gives
\[
{g}'_{SN\mathunderscore SG} (x)=-\frac{4\sqrt x \left( {x^{2}-1}
\right)\sqrt {2x^{2}+2} }{\left[ {4x^{3/2}+\left( {x^{2}+1} \right)\sqrt
{2x^{2}+2} } \right]^{2}}
\left\{ {{\begin{array}{*{20}c}
 {>0} & {x<1} \\
 {<0} & {x>1} \\
\end{array} }} \right..
\]

Also we have
\begin{equation}
\label{eq25}
\beta_{SBN\mathunderscore SG} =\mathop {\sup }\limits_{x\in (0,\infty )}
g_{SN\mathunderscore SG} (x)=g_{SN\mathunderscore SG} (1)=\frac{2}{3}.
\end{equation}

By the application Lemma 3.1 with (\ref{eq25}) we get the required result.

\item \textbf{For }$\bf{D_{CN} \le \textstyle{7 \over 3}D_{CS}} $\textbf{: }We have $\beta_{CN\mathunderscore CS} ={{f}''_{CN} (1)} \mathord{\left/ {\vphantom {{{f}''_{CN} (1)} {{f}''_{CS} (1)}}} \right. \kern-\nulldelimiterspace} {{f}''_{CS} (1)}=\textstyle{7 \over 3}$. Now, we have to show that $\textstyle{7 \over 3}D_{CS} -D_{CN} \ge 0$, i.e., $\textstyle{1 \over 3}\left( {4C+3N-7S} \right)\ge 0$. This is true in view of part 6.

\item \textbf{For }$\bf{D_{CS} \le 3D_{AN}} $\textbf{: }Let us consider the function $_{\mathrm{\thinspace }}g_{CS\mathunderscore AN} (x)={{f}''_{CS} (x)} \mathord{\left/ {\vphantom {{{f}''_{CS} (x)} {{f}''_{AN} (x)}}} \right. \kern-\nulldelimiterspace} {{f}''_{AN} (x)}$. This gives
\[
{g}'_{CS\mathunderscore AN} (x)=-\frac{18\sqrt x \left( {x-1} \right)\times
v_{3} (x)}{\left( {x^{2}+1} \right)^{2}\left( {x+1} \right)^{2}\sqrt
{2x^{2}+2} }
\left\{ {{\begin{array}{*{20}c}
 {>0} & {x<1} \\
 {<0} & {x>1} \\
\end{array} }} \right.,
\]
where
\begin{align}
v_{3} (x)& =4\left( {x^{2}+1} \right)^{2}\sqrt {2x^{2}+2} -\left( {x+1}
\right)^{5}\notag\\
& =32\left[ {\left( {\sqrt {\frac{x^{2}+1}{2}} } \right)^{2}-\left(
{\frac{x+1}{2}} \right)^{5}} \right]>0.\notag
\end{align}

Above expression holds since $S>A$, $\forall x>0,\,x\ne 1$. Also we have
\begin{equation}
\label{eq26}
\beta_{CS\mathunderscore AN} =\mathop {\sup }\limits_{x\in (0,\infty )}
g_{CS\mathunderscore AN} (x)=g_{CS\mathunderscore AN} (1)=3.
\end{equation}

By the application Lemma 3.1 with (\ref{eq26}) we get the required result.

\item \textbf{For }$\bf{D_{CN} \le \textstyle{7 \over 9}D_{CG} }$\textbf{: }Let us consider the function $_{\mathrm{\thinspace }}g_{CN\mathunderscore CG} (x)={{f}''_{CN} (x)} \mathord{\left/ {\vphantom {{{f}''_{CN} (x)} {{f}''_{CG} (x)}}} \right. \kern-\nulldelimiterspace} {{f}''_{CG} (x)}$. This gives
\[
{g}'_{CN\mathunderscore CG} (x)=-\frac{16\sqrt x \left( {x-1} \right)\left(
{x+1} \right)^{2}}{\left[ {16x^{3/2}+\left( {x+1} \right)^{2}} \right]^{2}}
\left\{ {{\begin{array}{*{20}c}
 {>0} & {x<1} \\
 {<0} & {x>1} \\
\end{array} }} \right..
\]

Also we have
\begin{equation}
\label{eq27}
\beta_{CN\mathunderscore CG} =\mathop {\sup }\limits_{x\in (0,\infty )}
g_{CN\mathunderscore CG} (x)=g_{CN\mathunderscore CG} (1)=\frac{7}{9}.
\end{equation}

By the application Lemma 3.1 with (\ref{eq27}) we get the required result.

\item \textbf{For }$\bf{D_{SG} \le \textstyle{6 \over 5}D_{RG}} $\textbf{: }We have $\beta_{SG\mathunderscore RG} ={{f}''_{SG} (1)} \mathord{\left/ {\vphantom {{{f}''_{SG} (1)} {{f}''_{RG} (1)}}} \right. \kern-\nulldelimiterspace} {{f}''_{RG} (1)}=\textstyle{6 \over 5}$. Now, we have to show that $\textstyle{6 \over 5}D_{RG} -D_{SG} \ge 0$, i.e., $\textstyle{1 \over 5}\left( {6R-G-5S} \right)\ge 0$. We can write $\textstyle{6 \over 5}D_{RG} -D_{SG}{\mathrm{\thinspace }}=b\,f_{SG\mathunderscore RG} (a/b)$, where
\[
f_{SG\mathunderscore RG} (x)=\textstyle{6 \over 5}f_{RG} (x)-f_{SG} (x)
=\frac{1}{10\left( {x+1} \right)}\times v_{3} (x),
\]
where
\[
v_{3} (x)=8\left( {x^{2}+x+1} \right)-2\sqrt x \left( {x+1} \right)-5\left(
{x+1} \right)\sqrt {2x^{2}+2} .
\]

In order to prove the non-negativity of $v_{3} (x)$, let us consider the
function
\begin{align}
h_{3} (x)& =\left[ {8\left( {x^{2}+x+1} \right)-2\sqrt x \left( {x+1} \right)}
\right]^{2}-\left( {5\left( {x+1} \right)\sqrt {2x^{2}+2} } \right)^{2}\notag\\
& =\left( {14x^{2}+24x^{3/2}+44x+24\sqrt x +14} \right)\left( {\sqrt x -1}
\right)^{4}.\notag
\end{align}

Since $h_{3} (x)\ge 0$, giving $v_{3} (x)\ge 0$, $\forall x>0$. The
non-negativity of the expression $8\left( {x^{2}+x+1} \right)-2\sqrt x
\left( {x+1} \right)$ can be shown easily following the same lines, i.e.
\begin{align}
& \left[ {4\left( {x^{2}+x+1} \right)} \right]^{2}-\left[ {\sqrt x \left(
{x+1} \right)} \right]^{2}\notag\\
& \hspace{20pt} =16x^{4}+31x^{3}+46x^{2}+31x+16>0.\notag
\end{align}

This implies that $f_{SG\mathunderscore RG} (x)\ge 0$, $\forall x>0$, hence
proving the required result.

\item \textbf{For }$D_{CG} \le \textstyle{9 \over 5}D_{RG} $\textbf{: }Let us consider the function $_{\mathrm{\thinspace }}g_{CG\mathunderscore RG} (x)={{f}''_{CG} (x)} \mathord{\left/ {\vphantom {{{f}''_{CG} (x)} {{f}''_{RG} (x)}}} \right. \kern-\nulldelimiterspace} {{f}''_{RG} (x)}$. This gives
\[
{g}'_{CG\mathunderscore RG} (x)=-\frac{144\sqrt x \left( {x-1} \right)\left(
{x+1} \right)^{2}}{\left[ {16x^{3/2}+3\left( {x+1} \right)^{3}} \right]^{2}}
\left\{ {{\begin{array}{*{20}c}
 {>0} & {x<1} \\
 {<0} & {x>1} \\
\end{array} }} \right..
\]

Also we have
\begin{equation}
\label{eq28}
\beta_{CG\mathunderscore RG} =\mathop {\sup }\limits_{x\in (0,\infty )}
g_{CG\mathunderscore RG} (x)=g_{CG\mathunderscore RG} (1)=\frac{9}{5}.
\end{equation}

By the application Lemma 3.1 with (\ref{eq28}) we get the required result.

\item \textbf{For }$D_{RG} \le 5D_{AN} $\textbf{: }Let us consider the function $_{\mathrm{\thinspace }}g_{RG\mathunderscore AN} (x)={{f}''_{RG} (x)} \mathord{\left/ {\vphantom {{{f}''_{RG} (x)} {{f}''_{AN} (x)}}} \right. \kern-\nulldelimiterspace} {{f}''_{AN} (x)}$. This gives
\[
{g}'_{RG\mathunderscore AN} (x)=-\frac{24\sqrt x \left( {x-1}
\right)}{\left( {x+1} \right)^{4}}
\left\{ {{\begin{array}{*{20}c}
 {>0} & {x<1} \\
 {<0} & {x>1} \\
\end{array} }} \right..
\]

Also we have
\begin{equation}
\label{eq29}
\beta_{RG\mathunderscore AN} =\mathop{\sup }\limits_{x\in (0,\infty )}
g_{RG\mathunderscore AN} (x)=g_{RG\mathunderscore AN} (1)=5.
\end{equation}

By the application Lemma 3.1 with (\ref{eq29}) we get the required result.
\end{enumerate}
\end{proof}

\begin{remark} The above 13 parts allows writing inequalities in their
equivalent forms:
\begin{multicols}{2}
\begin{enumerate}
\item $\frac{2G+S}{3}\le N;$
\item $\frac{2C+7G}{9}\le N;$
\item $\frac{S+3N}{4}\le A;$
\item $\frac{2S+H}{3}\le A;$
\item $\frac{2C+3N}{7}\le R;$
\item $\frac{G+5S}{6}\le R$
\item $\frac{4G+5C}{9}\le R;$
\item $S\le \frac{4C+3N}{7};$
\item $9R+4S\le 9C+4H;$
\item $2G+3C\le 2S+3R;$
\item $3N+C\le 3A+S;$
\item $5N+R\le 5A+G.$
\end{enumerate}
\end{multicols}
\end{remark}

Based on these equivalent versions, here below is an improvement over
inequalities appearing in (\ref{eq2}):

\begin{proposition} The following inequalities hold:
\[
H\le G\le \left\{ {\begin{array}{l}
 \textstyle{{2G+S} \over 3} \\
 \textstyle{{2C+7G} \over 9} \\
 \end{array}} \right\}\le N\le \textstyle{{3A+S-C} \over 3}\le \left\{
{\begin{array}{l}
 \textstyle{{S+3N} \over 4} \\
 \textstyle{{2S+H} \over 3} \\
 \end{array}} \right\}\le A\le \left\{ {\begin{array}{l}
 \textstyle{{G+5S} \over 6} \\
 \textstyle{{4G+5C} \over 9} \\
 \end{array}} \right\}\le
\]
\begin{equation}
\label{eq30}
\le R\le \left\{ {\begin{array}{l}
 S \\
 5A+G-5N \\
 \end{array}} \right\}\le \textstyle{{9C+4H-9R} \over 4}\le C\le
\textstyle{{2S+3R-2G} \over 3}.
\end{equation}
\end{proposition}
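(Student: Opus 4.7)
The plan is to verify the chain in (\ref{eq30}) link by link. Since $\le$ is transitive and each brace is claimed to lie between the same two neighbouring expressions, this amounts to roughly eighteen pairwise inequalities. I would sort them by the tool used.

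First, many links are already done: the outer bounds $H\le G$ and $R\le S$ are contained in (\ref{eq2}); the entry inequalities $G\le(2G+S)/3$ and $G\le(2C+7G)/9$ are just $G\le S$ and $G\le C$; and each of $(2G+S)/3\le N$, $(2C+7G)/9\le N$, $N\le(3A+S-C)/3$, $(S+3N)/4\le A$, $(2S+H)/3\le A$, $(G+5S)/6\le R$, $(4G+5C)/9\le R$, $R\le 5A+G-5N$, $S\le(9C+4H-9R)/4$ and $C\le(2S+3R-2G)/3$ is a direct restatement of one of the twelve items in Remark 3.1.

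Second, a few intermediate links bridge two compound expressions and require small manipulations. Here I would lean on the two identities
\[
C+H=2A, \qquad S^{2}+G^{2}=2A^{2},
\]
the first being immediate from the definitions of $C$ and $H$, and the second from $S^{2}-A^{2}=A^{2}-G^{2}=(a-b)^{2}/4$. For instance $(3A+S-C)/3\le(2S+H)/3$ rearranges to $3A-C-H\le S$ and collapses via $C+H=2A$ to $A\le S$; the link $(3A+S-C)/3\le(S+3N)/4$ follows from item (11) of the Remark together with the trivial bound $3A\le 2S+C$; and $(9C+4H-9R)/4\le C$ clears to $5C+4H\le 9R$, which reduces to $(a-b)^{2}\ge 0$.

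Third, the two genuinely new bounds $A\le(G+5S)/6$ and $A\le(4G+5C)/9$ are where I expect most of the work, since they do not appear on the Remark list. For $A\le(G+5S)/6$ I would square $6A\le G+5S$, substitute $S^{2}+G^{2}=2A^{2}$, and factor the resulting difference as $(S-G)(7S+17G)\ge 0$, which holds since $S\ge G$. For $A\le(4G+5C)/9$, setting $s=a+b$ and $t=\sqrt{ab}$ (so $a^{2}+b^{2}=s^{2}-2t^{2}$) turns the inequality into $(2t-s)(10t+s)\le 0$, which is the AM--GM inequality $2\sqrt{ab}\le a+b$. Finally, $5A+G-5N\le(9C+4H-9R)/4$ is to be attacked by the same $(s,t)$-parametrization after using $4H=8A-4C$ to eliminate $H$; the expression should factor through $s-2t$. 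The main obstacle throughout is spotting the correct algebraic identity or factorization for each of these three links; once $C+H=2A$, $S^{2}+G^{2}=2A^{2}$, and the $(s,t)$-substitution are in hand, every remaining step is routine bookkeeping.
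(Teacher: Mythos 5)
Your overall plan---verify the chain link by link, reading most links off (\ref{eq2}) and the twelve identities of Remark 3.1, and settling the few remaining ones by elementary algebra---is exactly what the paper has in mind (its entire proof is the one-line remark that (\ref{eq30}) follows by arguments similar to Theorem 3.1), and your handling of $A\le (G+5S)/6$, $A\le (4G+5C)/9$, $(3A+S-C)/3\le (2S+H)/3$ and $(9C+4H-9R)/4\le C$ is correct. Two of your steps, however, do not work as stated. The justification you give for $(3A+S-C)/3\le (S+3N)/4$ is backwards: after clearing denominators this inequality reads $9N\ge 12A+S-4C$, i.e.\ it needs a \emph{lower} bound on $N$, whereas item (11) of the Remark, $3N+C\le 3A+S$, is an \emph{upper} bound on $N$; combining it with $3A\le 2S+C$ only bounds $12A+S-4C-9N$ from \emph{below} by $3A-2S-C$, which proves nothing. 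The link is nonetheless true: writing $9N+4C-12A-S=2\Delta-3h-D_{SA}$ via (\ref{eq5}) and (\ref{eq6}), one has $D_{SA}\le\tfrac14\Delta$ from (\ref{eq19}) together with (\ref{eq5}), and $h=\Delta\cdot\frac{a+b}{2\left(\sqrt a+\sqrt b\right)^{2}}\le\tfrac12\Delta$, so the quantity is at least $\tfrac14\Delta\ge 0$.

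More seriously, the link $5A+G-5N\le (9C+4H-9R)/4$, which you defer to an $(s,t)$-computation, is false, and carrying out your own proposed computation exposes this: with $s=a+b$, $t=\sqrt{ab}$ one finds
\[
20A+4G-20N+9R-9C-4H=\frac{(s-2t)(s-6t)}{3s},
\]
which is indeed divisible by $s-2t$ but carries the cofactor $s-6t$, and this is positive whenever $\sqrt{a/b}+\sqrt{b/a}>6$. Concretely, for $a=100$, $b=1$ one gets $5A+G-5N=77.5$ while $(9C+4H-9R)/4\approx 74.76$. This branch of the brace in (\ref{eq30}) is therefore a defect of the Proposition itself (the $S$ branch is fine by item (9), and the weaker statement $5A+G-5N\le C$ does hold); your plan as written assumes the factorization will close the argument, when in fact completing it refutes the claim.
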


Inequalities appearing (\ref{eq30}) can be proved by using similar arguments of Theorem 3.1.

\subsection{Proportionality Relations among Means}

As a part of (\ref{eq19}), let us consider the following inequalities:
\begin{equation}
\label{eq31}
\textstyle{1 \over 4}\Delta \le \textstyle{3 \over 7}D_{CN} \le \textstyle{1
\over 3}D_{CG} \le \textstyle{3 \over 5}D_{RG} \le h.
\end{equation}
The expression (\ref{eq31}) has six means istead of seven. For simplicity, let us
rewrite the expression (\ref{eq31}):
\begin{equation}
\label{eq32}
W_{1} \le W_{2} \le W_{3} \le W_{4} \le W_{5} ,
\end{equation}

\bigskip
\noindent where for example $W_{1} =\textstyle{1 \over 4}\Delta $, $W_{2}
=\textstyle{3 \over 7}D_{CN} $, $W_{9} =\textstyle{1 \over 3}D_{CG} $, etc.
The inequalities (\ref{eq32}) again admits 10 nonnegative differences. These
differences satisfies some natural inequalities given in a \textbf{pyramid}
below:

\[
D_{W_{2} W_{1} }^{1} ,
\]
\[
D_{W_{3} W_{2} }^{2} \le D_{W_{3} W_{1} }^{3} ,
\]
\[
D_{W_{4} W_{3} }^{4} \le D_{W_{4} W_{2} }^{5} \le D_{W_{4} W_{1} }^{6} ,
\]
\[
D_{W_{5} W_{4} }^{7} \le D_{W_{5} W_{3} }^{8} \le D_{W_{5} W_{2} }^{9} \le
D_{W_{5} W_{1} }^{10} ,
\]

\bigskip
\noindent where $D_{W_{2} W_{1} }^{1} :=W_{2} -W_{1} $, $D_{W_{7} W_{6} }^{16} :=W_{7}
-W_{6} $, etc. Interestingly, the above 10 nonnegative differences are
equals to each other by some multiplicative constants:
\begin{align}
& \textstyle{7 \over 2}D_{W_{2} W_{1} }^{1} =\textstyle{{21} \over 8}D_{W_{3}
W_{2} }^{2} =\textstyle{3 \over 2}D_{W_{3} W_{1} }^{3} =\textstyle{{15}
\over 8}D_{W_{4} W_{3} }^{4} =\textstyle{{35} \over {32}}D_{W_{4} W_{2}
}^{5} =\notag\\
\label{eq33}
& \hspace{20pt} =\textstyle{5 \over 6}D_{W_{4} W_{1} }^{6} =\textstyle{5 \over 4}D_{W_{5}
W_{4} }^{7} =\textstyle{3 \over 4}D_{W_{5} W_{3} }^{8} =\textstyle{7 \over
{12}}D_{W_{5} W_{2} }^{9} =\textstyle{1 \over 2}D_{W_{5} W_{1} }^{10}
=\frac{\left( {\sqrt a -\sqrt b } \right)^{4}}{a+b}.
\end{align}

Based on the expressions (\ref{eq4}), (\ref{eq5}), (\ref{eq6})  and (\ref{eq33}) we have the following proportionality relations among the six means:
\begin{multicols}{2}
\begin{enumerate}
\item $4A=2(C+H)=3R+H;$
\item $3R=C+2A=2C+H;$
\item $3N=2A+G;$
\item $3C+2H=3R+2A;$
\item $C+6A=H+6R;$
\item $C+3N=G+3R;$
\item $3N+2A=2C+2H+G;$
\item $27R+2G=14A+9C+6N;$
\item $3\left( {N+3R} \right)=8A+3C+G;$
\item $3G+8H+9C=3R+8A+9N;$
\item $4G+14H+17C=9R+14A+12N;$
\item $5G+24H+31C=21R+24A+15N.$
\end{enumerate}
\end{multicols}

\end{document}